\documentclass[12pt]{article}

\usepackage{multicol}

\usepackage{amsthm,amssymb,amsmath}

\newtheorem{theorem}{Theorem}
\newtheorem{lemma}[theorem]{Lemma}

\newtheorem{example}[theorem]{Example}

\newcommand{\GF}{\textnormal{GF}}

\title{On completing three cyclic transversals to a latin square}
\author{Nicholas J. Cavenagh\\
School of Mathematics and Statistics\\
The University of New South Wales\\
Sydney 2052, Australia\\
\\
Carlo H\"{a}m\"{a}l\"{a}inen \\
Department of Mathematics \\
The University of Queensland \\
QLD 4072, Australia \\
\\
Adrian M. Nelson\\
School of Mathematics and Statistics\\
University of Sydney\\
NSW 2006, Australia\\
}

\begin{document}
\maketitle

\begin{abstract}
Let $P$ be a partial latin square of prime order $p>7$ consisting of
three cyclically generated transversals. Specifically, let $P$ be a
partial latin square of the form:
\[
P=\{(i,c+i,s+i),(i,c'+i,s'+i),(i,c''+i,s''+i)\mid 0 \leq i< p\}
\]
for some distinct $c,c',c''$ and some distinct $s,s',s''$. In this
paper we show that any such $P$ completes to a latin square which is
diagonally cyclic.
\end{abstract}

Keywords: latin square, diagonally cyclic latin square,
transversals, complete mappings, toroidal semi-queens.

\section{Background information}\label{sec1}
\setcounter{theorem}{0}

A {\em latin square} of order $n$ is an $n\times n$ array of symbols such that
 each cell contains one symbol and each symbol occurs once in each row and once in each column.
In this paper, rows, columns and symbols are taken from the set
$N=\{0,1,\dots ,n-1\}$ and are always calculated modulo $n$. We use
the notation $i\circ j$ to denote the symbol in cell $(i,j)$ of a
latin square $L=L^{\circ}$. We may denote $L$ a set of ordered (row,
column, symbol) triples of the form $(i,j,i\circ j)$. A {\em partial
latin square}, as its name suggests, is a partially filled-in
$n\times n$ array of symbols such that each cell contains at {\em
most} one symbol and each symbol occurs at most once in each row and
at at most once in each column.

A latin square $L^{\circ}$ is said to be {\em diagonally cyclic} if
$n$ is odd and for each cell $(i,j)$, $i\circ j= k$ implies that
$(i+1)\circ (j+1)= k+1$. Diagonally cyclic squares of even order do
not exist; for a nice proof see~\cite{gr2}. Henceforth we assume
that $n$ is odd.

Let $j$ be coprime to some odd $n$ and $j\neq 1$. If we define
$\circ_j$ by $0\circ_j i=ij$ (mod $p$) for each $i$, a valid
diagonally cyclic latin square is generated, which we denote by
$B_{n,j}$. The following diagram shows $B_{5,3}$.

\[
\begin{array}{|c|c|c|c|c|}
\hline
0 & 3 & 1 & 4 & 2 \\
\hline
3 & 1 & 4 & 2 & 0 \\
\hline
1 & 4 & 2 & 0 & 3 \\
\hline
4 & 2 & 0 & 3 & 1 \\
\hline
2 & 0 & 3 & 1 & 4  \\
\hline
\end{array}
\]

Clearly a diagonally cyclic latin square is defined by the ordering
of symbols in row $0$. However, not all orderings of the first row
will complete to a diagonally cyclic latin square, as symbols may
not be repeated within a column. In fact, row $0$ ``generates'' a
diagonally cyclic latin square if and only if the operation
$f(i)=0\circ i - i$ is a permutation of the set $N$.

Let $L$ be a diagonally cyclic latin square of order $n$. For any
constant integer $c$, we define $L\oplus_1 c$ to be the diagonally
cyclic latin square formed by cycling the rows of $L$ by $c$ (modulo
$n$). That is,
\[
L\oplus_1 c = \{(i+c,j,k)\mid (i,j,k)\in L\}.
\]
Clearly such a transformation is invertible, and thus gives rise to
equivalence classes of diagonally cyclic latin squares. We thus
often assume, without loss of generality, that $0\circ 0=0$. In
fact, if $0\circ 0=0$ we say that the diagonally cyclic latin square
is in {\em standard order}. Similarly we define $L\oplus_2 c$ and
$L\oplus_3 c$ to be the diagonally latin squares formed by adding
constant $c$ to each column or symbol, respectively.

Let $L$ be a diagonally cyclic latin square in standard order.
 For any $c$ coprime to $n$, we define $L\times c$ to be the diagonally cyclic latin
square generated by operation~$\circ_c$, where $i\circ_c j =
(ic^{-1}\circ jc^{-1})c$\ (mod $n$), for each $i$, $j$ such that $0\leq i,j\leq
n-1$. (Informally, the first row of $L\times c$ is formed by
multiplying each column and symbol of the first row of $L$ by $c$.)
Again, such a transformation is invertible. For
the curious reader, other equivalences of diagonally cyclic latin
squares are given in Lemma~$2.1$ of \cite{gr1}.

So suppose that $L$ is a diagonally cyclic latin square of prime order $p$
containing the partial latin square $P$ defined as in the abstract.
Consider the isomorphic latin square
\[
L'=(((L\oplus_1(-s))\oplus_3 (-c))\times ((c'-c)^{-1}).
\]
Observe that $L'$ is in standard order, with symbol $(s'-s)/(c'-c)$ in row $0$
and column $1$.

In Section~\ref{sec7}, when we prove the main
result,  we thus assume, without any loss of generality, that
 $0\circ 0=0$ and $0\circ 1=j$.

In any diagonally cyclic latin square $L$, the set of symbols
$L(\alpha)=\{(i,\alpha+i,0\circ\alpha+i )\mid 0\leq i\leq n-1\}$
contains each row, each column and each symbol exactly once and is
thus a {\em transversal}. We denote $L(\alpha)$ as a {\em cyclic
transversal} of $L$. In fact, the cyclic transversals $L(\alpha)$,
$0\leq \alpha\leq n-1$ are pairwise disjoint. Thus any diagonally
cyclic latin square is orthogonal to the latin square defined by the
relation $i\circ j=j-i$.

The research in this paper is motivated by the following problem
posed by Alspach and Heinrich~\cite{alhe}: For each $k$, does there exist an
$N(k)$ such that if $k$ transversals of a partial latin square of
order $n>N(k)$ are prescribed, the square can always be completed?
The existence of idempotent latin squares for every order $n\neq 2$
shows that $N(1)=3$. H\"{a}ggkvist and Daykin~\cite{hada} have shown
that every partial $n\times n$ latin square where each row, column
and symbol is used at most $2^{-9}\sqrt{n}$ times is completable
whenever $n$ is divisible by $16$. However it is as yet unconfirmed
that $N(2)$ exists.
 Gr\"uttm\"uller~\cite{gr2} showed that
$N(k)\geq 4k-1$. Computational results on latin squares of small
order support the conjecture that $N(k)=4k-1$. 

As the above problem is difficult, it seems sensible to consider the
following modified version, proposed by Gr\"uttm\"uller~\cite{gr1}:
Does there exist an odd constant $C(k)$ such that if $k$ {\em
cyclically generated} diagonals of a partial latin square of odd
order $n\geq C(k)$ are prescribed, the square can always be
completed? Gr\"uttm\"uller showed that $C(2)=3$ (\cite{gr2}) and
that $C(k)\geq 3k-1$ for $k\geq 3$ (\cite{gr1}).

In this paper we provide some evidence that $C(3)$ may be equal to $9$.
Specifically, we show (in Theorem~\ref{main2}) that if $P$ is a partial
latin square of prime order $p>7$
of the form
\[
P=\{(i,c+i,s+i),(i,c'+i,s'+i),(i,c''+i,s''+i)\mid 0\leq i< p\}
\]
for some distinct $c$, $c'$, $c''$ and some distinct $s$, $s'$,
$s''$, then $P$ has a completion to a (diagonally cyclic) latin
square.

In our proof, in Section 2 we first identify a method to reorder
certain cyclically generated transversals within $B_{p,j}$ to form a
new diagonally cyclic latin square. This reordering or {\em trade}
is algebraically defined and the transversals are based on some
linear transformation of the quadratic residues mod $p$. We are thus
able to redefine the problem above in terms of simultaneous
equations, where instead of a precise solution to each equation, we
instead require some information about which coset the solution
belongs to. Here cosets are taken from the multiplicative group of
the field of prime order $p$.

When $p$ is large enough (specifically, when $p\geq 191$) in Section 4
we conveniently exploit Weil's theorem to obtain the required
solution. Computational methods are then employed in
Section~\ref{sec8} for the remaining, smaller primes.

In the next section we describe some combinatorial objects which are
equivalent to diagonally cyclic latin squares.

\section{Combinatorial equivalences}

Diagonally cyclic latin squares have a number of intriguing
equivalences. Firstly, diagonally cyclic latin squares of order $n$
are equivalent to transversals in the latin square $B_n$, which is
precisely the operation table for the integers modulo $n$. To see
this, let $L$ be a diagonally cyclic latin square with operation~$\circ$. 
Let $P\subseteq B_n$ be the partial latin square defined by
$P=\{(0\circ i-i,i,0\circ i)\mid 0\leq i\leq n-1\}$. Then $P$ is a
transversal. Conversely, let $P$ be a transversal of $B_n$. Then,
for each triple $(r,c,r+c \pmod{n})\in P$, define $0\circ c = r+c$.
Then $\circ$ generates a valid diagonally cyclic latin square.

Another equivalence involves placing $n$ semi-queens on a toroidal
$n \times n$ chessboard such that no two semi-queens may attack each
other. A semi-queen attacks any piece in its row and column, but
only on the {\em ascending} diagonals; i.e. those which begin in the
lower left and finish in the upper right. On a toroidal chessboard
these diagonals ``wrap around'' in the obvious fashion. Thus if we
replace the chessboard with the operation table for the integers
modulo $n$, we see that each queen must lie on a different symbol,
and no queens may share a common row or a common column. So a set of
$n$ semi-queens which may not attack each other on the next turn is
equivalent to a transversal within $B_n$. For more detail on the
semi-queen problem, we refer the reader to~\cite{vardi}.

Diagonally cyclic latin squares in standard order are also
equivalent to {\em complete mappings} of the cyclic group. We refer
the reader to~\cite{hsc} for more detail on complete mappings.

\section{How to reorder cyclic transversals within $B_{n,j}$ via number theory}\label{sec6}

Consider the diagonally cyclic latin square $B_{11,6}$ below. If we
replace each symbol in bold with its subscript, we obtain another
diagonally cyclic latin square. In fact, the bold symbols in the
first row are precisely the quadratic residues modulo $11$, and the
subscripts in the first row are obtained by multiplication by $4$.
This example is part of a general construction given in the
following theorem.
\begin{example}
\[
\begin{array}{|c|c|c|c|c|c|c|c|c|c|c|}
\hline
0 & 6 & {\bf 1_4} & 7 & 2 & 8 & {\bf 3_1} & {\bf 9_3} & {\bf 4_5} & 10 & {\bf 5_9}   \\
\hline
{\bf 6_{10}} & 1 & 7 & {\bf 2_5} & 8 & 3 & 9 & {\bf 4_2} & {\bf 10_4} & {\bf 5_6} & 0   \\
\hline
1 & {\bf 7_{0}} & 2 & 8 & {\bf 3_6} & 9 & 4 & 10 & {\bf 5_3} & {\bf 0_5} & {\bf 6_7}   \\
\hline
{\bf 7_8} & 2 & {\bf 8_{1}} & 3 & 9 & {\bf 4_7} & 10 & 5 & 0 & {\bf 6_4} & {\bf 1_6}    \\
\hline
{\bf 2_7} & {\bf 8_9} & 3 & {\bf 9_{2}} & 4 & 10 & {\bf 5_8} & 0 & 6 & 1 & {\bf 7_5}   \\
\hline
{\bf 8_6} & {\bf 3_8} & {\bf 9_{10}} & 4 & {\bf 10_{3}} & 5 & 0 & {\bf 6_9} & 1 & 7 & 2   \\
\hline
3 & {\bf 9_7} & {\bf 4_9} & {\bf 10_{0}} & 5 & {\bf 0_{4}} & 6 & 1 & {\bf 7_{10}} & 2 & 8   \\
\hline
9 & 4 & {\bf 10_8} & {\bf 5_{10}} & {\bf 0_{1}} & 6 & {\bf 1_{5}} & 7 & 2 & {\bf 8_{0}} & 3  \\
\hline
4 & 10 & 5 & {\bf 0_9} & {\bf 6_{0}} & {\bf 1_{2}} & 7 & {\bf 2_{6}} & 8 & 3 & {\bf 9_{1}}  \\
\hline
{\bf 10_{2}} & 5 & 0 & 6 & {\bf 1_{10}} & {\bf 7_{1}} & {\bf 2_{3}} & 8 & {\bf 3_{7}} & 9 & 4   \\
\hline
5 & {\bf 0_{3}} & 6 & 1 & 7 & {\bf 2_{0}} & {\bf 8_{2}} & {\bf 3_{4}} & 9 & {\bf 4_{8}} & 10  \\
\hline
\end{array}
\]
\label{exone}
\end{example}

\begin{theorem}
Let $(a,b,c)$ be a solution to the equation:
\begin{eqnarray}
(1-j)a^m+jb^m & \equiv & c^m \pmod{p} \label{egsactly}
\end{eqnarray}
where $p$ is a prime, $j\neq 1$, $j$ is coprime to $p$, $m\geq 2$,
$m$ divides $p-1$ and $a^m$, $b^m$ and $c^m$ are non-zero and pairwise distinct (mod~$p$).
Let $\alpha\neq 0$ and $\gamma$ be constants modulo $p$. Next,
replace each symbol in the first row of $B_{p,j}$ of the form
\[
\alpha\beta^m+\gamma
\]
(where $\beta\neq 0$) with the symbol
\[
\alpha(b/c)^m\beta^m+\gamma.
\]
Then this new first row generates a valid diagonally cyclic latin
square. \label{mth}
\end{theorem}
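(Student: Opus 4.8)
The plan is to invoke the criterion recorded in Section~\ref{sec1}: a prescribed first row generates a valid diagonally cyclic latin square precisely when the map $f(i)=(0\circ i)-i$ is a permutation of $N$, where $0\circ i$ now denotes the (possibly substituted) symbol in column $i$. So the whole statement reduces to checking that this modified $f$ is a bijection of $\mathbb{F}_p$. Write $\mu=(b/c)^m$ and let $Q\subseteq\mathbb{F}_p^{\times}$ be the subgroup of nonzero $m$th powers, which has index $m$ since $m\mid p-1$. Because $\mu\in Q$, multiplication by $\mu$ permutes $Q$, so the substitution $\alpha\beta^m+\gamma\mapsto\alpha\mu\beta^m+\gamma$ merely rearranges the symbols lying in $\alpha Q+\gamma$ and fixes all others; in particular each symbol still occurs once in the new row.

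First I would write $f$ out explicitly. The affected columns form the set $V=\{\,i:(ij-\gamma)/\alpha\in Q\,\}$, and for $i\in V$ the old symbol $ij=\alpha\beta^m+\gamma$ is replaced by $\alpha\mu\beta^m+\gamma=\mu ij+\gamma(1-\mu)$. Hence
\[
f(i)=
\begin{cases}
i(j-1), & i\notin V,\\
i(\mu j-1)+\gamma(1-\mu), & i\in V.
\end{cases}
\]
On $\mathbb{F}_p\setminus V$ the map $f$ is multiplication by $j-1\neq0$, so it is injective there with image $\mathbb{F}_p\setminus(j-1)V$. It therefore suffices to show that $f$ carries $V$ bijectively onto $(j-1)V$, since then $f$ is injective on each block of the partition $V\sqcup(\mathbb{F}_p\setminus V)$ with images partitioning $\mathbb{F}_p$, forcing $f$ to be a bijection.

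Parametrising $V$ by $i=(\alpha w+\gamma)/j$ with $w\in Q$, a short simplification should give $f(i)=\bigl(\alpha(\mu j-1)w+\gamma(j-1)\bigr)/j$, while $(j-1)i=\bigl(\alpha(j-1)w+\gamma(j-1)\bigr)/j$. Comparing, $f(V)=(j-1)V$ holds if and only if $(\mu j-1)Q=(j-1)Q$ as cosets, that is, iff $(\mu j-1)/(j-1)\in Q$. This coset identity is the crux of the argument and the sole point where the hypothesis enters. To verify it I would rearrange equation~\eqref{egsactly} as $jb^m-c^m=(j-1)a^m$ and divide by $c^m$, obtaining $\mu j-1=(j-1)(a/c)^m$; hence $(\mu j-1)/(j-1)=(a/c)^m$, which lies in $Q$ because $a^m,c^m$ are nonzero. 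This also shows $\mu j-1\neq0$ (using $j\neq1$ and $a^m\neq0$), so $f$ really is injective on $V$. With the coset condition in hand the partition argument closes the proof. I expect the only delicate point to be the bookkeeping that turns the symbol-substitution into the clean formula for $f$ on $V$; once that is set up, equation~\eqref{egsactly} does all the arithmetic work.
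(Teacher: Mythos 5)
Your proof is correct and follows essentially the same route as the paper: checking that $f(i)=0\circ i-i$ is a permutation is exactly the paper's check that the column-$0$ symbols remain distinct, and your key identity $\mu j-1=(j-1)(a/c)^m$ is the same rearrangement of Equation~\eqref{egsactly} (namely $(b/c)^m-j^{-1}=(1-j^{-1})(a/c)^m$) that drives the paper's computation. The only difference is bookkeeping: you index by affected columns $V$ and argue via the partition $V\sqcup(\GF(p)\setminus V)$, while the paper indexes by the affected transversals directly.
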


\begin{proof}
Let $L$ be the square array (at this stage we haven't proved that it
is latin) generated by the new first row.  
Since $b^m\neq c^m$, 
$(b/c)^m$ acts as a derangement on the set of $m$th powers, so the symbols in the first row of $L$ (and
thus in all rows) are distinct.

It suffices, then, to check that all symbols in the first column are
distinct. Within $B_{p,j}$, symbol $\alpha\beta^m+\gamma$ occurs in
column $j^{-1}(\alpha\beta^m+\gamma)$ in row $0$. So this cyclic
transversal contains the symbol
$(\alpha \beta^m + \gamma) (1-j^{-1})$ 
in column $0$. Within $L$ this is replaced by
\begin{align*}
(\alpha (b/c)^m\beta^m + \gamma) - j^{-1}(\alpha\beta^m+\gamma)
&= \alpha\beta^m((b/c)^m-j^{-1}) + \gamma (1-j^{-1})  \\
&= (\alpha\beta^m(a/c)^m+ \gamma) (1-j^{-1}).
\end{align*}
Again, as $(a/c)^m$ permutes the $m$th powers, the symbols in the
first column (and thus in all columns) are distinct.
\end{proof}

Observe that $7^2+1^2=2\times 5^2$. Thus, for $m=2$, the triple
$(a,b,c)=(7,1,5)$ is a valid solution of Equation~\eqref{egsactly}
for $j=(p+1)/2$ and any prime $p> 7$. The earlier
Example~\ref{exone} demonstrates this for $p=11$, $\alpha=1$ and
$\gamma=0$.

\section{Completing sets of cyclic transversals}\label{sec7}

We now focus on the problem of completing three arbitrary cyclic
transversals to a (diagonally cyclic) latin square. In this section,
we restrict ourselves to the case where $p$ is a prime. We denote the
finite field of size $p$ by $\GF(p)$.

We may assume, without loss of generality, that we have transversals
generated by $0$ and $j$ in cells $(0,0)$ and $(0,1)$, respectively.
We
must have $j\neq 1$ for this to be a valid partial latin square. Our
third transversal is arbitrary; assume that it is generated by
symbol $e$ in column $c$. So that no symbols repeat in a row or in
a column, we have as necessary conditions $c\not\in\{0,1\}$,
$e\not\in\{0,j,c\}$ and $e-c\neq j-1$. If $e=jc$, then
these three transversals are contained within $B_{p,j}$ and thus
complete to a latin square. So we henceforth assume also that
$e\neq jc$.

Our aim is to apply Theorem~\ref{mth} to $B_{p,j}$ to obtain a latin
square which contains the above three cyclic transversals. To
transform the third transversal, it is sufficient to find $m$,
$\alpha$,
$\beta$ and $\gamma$, with 
$m \mid p-1$, $\alpha\neq 0$, $\beta\neq 0$, such that
\begin{eqnarray}
jc & = & \alpha\beta^m +\gamma \label{eqn2} \\
e & = & \alpha x\beta^m +\gamma \label{eqn2a}
\end{eqnarray}
where $x=(b/c)^m$ for some appropriate solution $(a,b,c)$ to
Equation \ref{egsactly}.
Equivalently (considering the 
conditions of Theorem~\ref{mth}), we assume that $x$ is an $m$th
power modulo $p$, $x\not\in\{0,1\}$ and that if we define
\[
F(x)=\frac{1-jx}{1-j},
\]
then $F(x)$ is also an $m$th power and $F(x) \notin \{0,1\}$.

Equations~\eqref{eqn2} and \eqref{eqn2a} imply that a given $x$
determines $\gamma$ and the product~$\alpha\beta^m$:
\begin{eqnarray}
\gamma & = & \frac{xjc-e}{x-1}, \\
\alpha \beta^m & = & \frac{e-jc}{x-1}. \label{eqn3}
\end{eqnarray}

Since $x\neq 1$ these equations are well-defined. Given a valid $x$ and
$m$, it is always possible to choose such $\alpha$, $\beta$ and $\gamma$.
However, in this process of transformation we do not wish to alter
the first two transversals in columns~$0$ and $1$.

Equivalently, $-\gamma/\alpha$ and $(j-\gamma)/\alpha$ must not be
$m$th powers modulo $p$ (although they may be equal to $0$). But
Equation~\eqref{eqn3} implies that $\alpha$ is a non-zero $m$th power
if and only if $(e-jc)/(x-1)$ is a non-zero $m$th power. Thus, the
following expressions must not be non-zero $m$th powers modulo $p$:
\begin{align*}
G(x) &= \frac{-\gamma(x-1)}{e-jc} = \frac{e-xjc}{e-jc} \\
H(x) &= \frac{(j-\gamma)(x-1)}{e-jc} = \frac{xj(1-c)+e-j}{e-jc} \label{eqn6}
\end{align*}
Note that if $H(x)\neq 1$ then $x\neq 1$. In turn, if $x\neq 1$ then
$F(x)\neq 1$. So if we assume that $H(x)\neq 1$ it is unnecessary to
specify $x\neq 1$ and $F(x)\neq 1$ as conditions. In summary, we
have the following.

\begin{lemma}
Let $p$ be prime. Let $c$, $e$ and $j$ be residues mod $p$ such
that $c\not\in\{0,1\}$, $j\not\in\{0,1\}$ and
$e\not\in\{0,\, j,\, c,\, c+j-1,\, jc\}$. Let $x\neq 0$ be an $m$th power mod
$p$ such that $F(x)$ is a non-zero $m$th power mod $p$,
 and $G(x)$ and $H(x)$ are {\em not}
non-zero $m$th powers mod $p$. Then, the three cyclic transversals
generated by $0\circ 0=0$, $0\circ 1=j$ and $0\circ c=e$
 can be completed to a diagonally cyclic latin square.
\label{pair}
\end{lemma}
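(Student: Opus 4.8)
The plan is to obtain the required square as the image of $B_{p,j}$ under a single application of the symbol-relabelling of Theorem~\ref{mth}. Observe first that $B_{p,j}$ already realizes two of the three prescribed transversals, since $0\circ_j 0 = 0$ and $0\circ_j 1 = j$; its cyclic transversal in column $c$, however, carries the symbol $jc$ in place of the desired $e$. So the whole task reduces to finding one relabelling that fixes the symbols $0$ and $j$ while sending $jc\mapsto e$, and the entire apparatus built up before the lemma has been arranged precisely to encode exactly these three requirements as conditions on $x$.

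First I would manufacture a solution $(a,b,c')$ of Equation~\eqref{egsactly}. Taking $c'=1$, $b^m=x$ and $a^m=F(x)$, the rearranged definition of $F$ gives $(1-j)F(x)+jx=1$, which is exactly \eqref{egsactly}; note $j\neq 1$ and (as $p$ is prime and $j\neq 0$) $j$ coprime to $p$ come from $j\notin\{0,1\}$. I then need $a^m,b^m,c'^m=F(x),x,1$ non-zero and pairwise distinct. Non-vanishing is hypothesised, and distinctness needs $x\neq 1$, $F(x)\neq 1$, and $F(x)\neq x$; since $F(x)=x$ forces $x=1$, everything follows once $x\neq 1$ and $F(x)\neq 1$ are known. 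These two are not assumed directly, so here I would lean on the discussion immediately preceding the lemma: as $1$ is a non-zero $m$th power, the hypothesis that $H(x)$ is not a non-zero $m$th power yields $H(x)\neq 1$, whence $x\neq 1$ and in turn $F(x)\neq 1$. With the solution in hand, I fix the relabelling parameters by setting $\beta=1$; because $e\neq jc$ and $x\neq 1$, the product $\alpha\beta^m=(e-jc)/(x-1)$ in \eqref{eqn3} is non-zero, so $\alpha\neq 0$, and $\alpha,\gamma$ are read off from \eqref{eqn2}--\eqref{eqn2a}. Theorem~\ref{mth} then certifies that the relabelled first row generates a valid diagonally cyclic latin square $L$, and by \eqref{eqn2}--\eqref{eqn2a} the symbol $jc$ in column $c$ is sent to $e$, so $L$ carries the third prescribed transversal.

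The step I expect to be the crux is verifying that the relabelling leaves the first two transversals untouched, since this is where the conditions on $G$ and $H$ must be matched against the combinatorics of Theorem~\ref{mth}. The symbol $0$ is altered exactly when it lies in the coset $\{\alpha\beta^m+\gamma:\beta\neq 0\}$, i.e.\ when $-\gamma/\alpha$ is a non-zero $m$th power; with $\beta=1$ this quantity is precisely $G(x)$, so the hypothesis that $G(x)$ is not a non-zero $m$th power pins the symbol $0$ in place. The identical argument applied to $(j-\gamma)/\alpha=H(x)$ fixes the symbol $j$. Hence the cyclic transversals generated by $0\circ 0=0$ and $0\circ 1=j$ persist in $L$, and together with the third transversal this shows that $L$ is a diagonally cyclic completion containing all three prescribed transversals, as required.
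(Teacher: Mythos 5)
Your proposal is correct and follows essentially the same route as the paper: the paper "proves" Lemma~\ref{pair} by the derivation immediately preceding it (normalising Equation~\eqref{egsactly} by $c^m$ so that $F(x)$ encodes solvability, reading $\alpha\beta^m$ and $\gamma$ off \eqref{eqn2}--\eqref{eqn3}, and using $G(x)$, $H(x)$ to keep the symbols $0$ and $j$ outside the relabelled coset, with $H(x)\neq 1$ supplying $x\neq 1$ and $F(x)\neq 1$), and your write-up is exactly this argument run forwards with the harmless normalisations $c'=1$, $\beta=1$ made explicit.
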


If we fix a value of $x$ there will be some inappropriate choices of
$c$, $e$ and $j$. However, we next show that for sufficiently large
primes, it is possible to find such a $x$ for any choice of $c$, $e$
and $j$ that satisfies the necessary conditions outlined above.

The techniques that follow are similar to those applied to constructing cyclic
triplewhist tournaments in~\cite{bur}.
We need the following version of Weil's Theorem (see~\cite{lidl}, Theorem~5.38).
Recall that a {\em multiplicative character} $\chi$ over $\GF(p)$
is a homomorphism from the multiplicative group of $\GF(p)-\{0\}$ into the multiplicative group of unitary complex numbers.
When Weil's theorem is applied it is also understood that $\chi(0)=0$ for any multiplicative character $\chi$.

\begin{theorem} (Weil's Theorem)
Let $\chi$ be a multiplicative character of the finite field
$\GF(p)$ with order $m>1$ and let $f$ be a polynomial of $\GF(p)[x]$ of degree $d$
which is not of the form $kg^m$ for some constant $k\in \GF(p)$ and some
$g\in \GF(p)[x]$. Then:
\[
\left|\sum_{c\in \GF(p)} \chi(f(c))\right|\leq (d-1)p^{1/2}.
\]
\label{veil}
\end{theorem}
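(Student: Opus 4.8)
The plan is to deduce this estimate not from first principles but from the Riemann hypothesis for curves over finite fields, which is the genuine analytic depth hiding behind the statement. The bridge is the superelliptic curve $C : y^m = f(x)$ over $\GF(p)$: the character sum $\sum_{c} \chi(f(c))$ is, up to a controlled error coming from the zeros of $f$, a piece of the count of $\GF(p)$-rational points of $C$. So the first move is to set up that correspondence precisely, and then to quote the Hasse--Weil bound for the point count.

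First I would convert point counts into character sums via orthogonality. Since $\chi$ has order exactly $m$, the characters trivial on $m$-th powers are precisely $\chi^0,\chi^1,\dots,\chi^{m-1}$, so for $a\neq 0$
\[
\#\{y\in\GF(p): y^m=a\}=\sum_{i=0}^{m-1}\chi^i(a),
\]
with $\chi^0$ the trivial character. Summing over $x=c$ and separating the trivial term, the number $N$ of affine $\GF(p)$-points of $C$ satisfies
\[
N=p+\sum_{i=1}^{m-1}S_i+E,\qquad S_i:=\sum_{c\in\GF(p)}\chi^i(f(c)),
\]
where $E$ collects the single points $y=0$ above the (at most $d$) roots of $f$. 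The hypothesis that $f$ is not of the form $kg^m$ is exactly what guarantees that $C$ is geometrically irreducible, so that none of the twists $(\chi^i,f)$ degenerates.

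Next I would package each $S_i$ into its $L$-function $L_i(T)=\exp\bigl(\sum_{r\ge 1}S_i^{(r)}T^r/r\bigr)$, where $S_i^{(r)}$ is the analogous sum over $\GF(p^r)$; under the nondegeneracy above each $L_i$ is a polynomial of degree at most $d-1$, these being the factors of the numerator of the zeta function of $C$. Writing $L_i(T)=\prod_k(1-\alpha_{i,k}T)$ gives $S_i=-\sum_k\alpha_{i,k}$. The Riemann hypothesis for curves (equivalently, for these $L$-functions) asserts $|\alpha_{i,k}|=p^{1/2}$ for every reciprocal root, and since there are at most $d-1$ of them, taking $i=1$ yields $\left|\sum_{c}\chi(f(c))\right|\le(d-1)p^{1/2}$, which is the claim.

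The main obstacle is the one step I would not attempt to prove, namely that $|\alpha_{i,k}|=p^{1/2}$: this \emph{is} Weil's theorem, and there is no elementary route to it. Everything else---the orthogonality relation converting roots of $y^m=a$ into character values, and the degree bound on the $L$-factors via the ramification of $y^m=f(x)$---is routine by comparison. Since the result is classical and recorded in the cited source, in the paper itself I would simply state it in the displayed form and pass directly to its application (\cite{lidl}, Theorem~5.38).
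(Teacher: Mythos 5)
The paper offers no proof of this theorem at all: it is Weil's classical result, stated verbatim and cited directly from Lidl and Niederreiter (Theorem~5.38), and your proposal ends in exactly the same place, quoting that source and deferring the one step of genuine depth (the Riemann hypothesis for curves, i.e.\ $|\alpha_{i,k}|=p^{1/2}$) to the literature. Your surrounding sketch of the standard reduction is a fair outline, with one small inaccuracy worth noting: the hypothesis $f\neq kg^m$ does \emph{not} force geometric irreducibility of the curve $y^m=f(x)$ (take $m=4$ and $f=g^2$ with $g$ squarefree and not a constant multiple of a square); what that hypothesis actually guarantees is nondegeneracy of the $i=1$ twist --- the only one needed for the stated bound --- namely that $L_1$ is a polynomial of degree at most $d-1$, so the argument survives if you drop the detour through irreducibility of $C$ and work with the single $L$-function of $\chi\circ f$.
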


For $m=2$, we will use the {\em quadratic character} $\eta$
defined as follows on $\GF(p) -\{0\}$:
\[
\eta(x) =
    \begin{cases}
        1 & \textnormal{ if $x$ is a quadratic residue;} \\
        -1 & \textnormal{ otherwise.}
    \end{cases}
\]
For $m=2$ we can obtain a more explicit version of Weil's Theorem (\cite{lidl}, Theorem~5.48):
\begin{theorem}
Let $\eta$ be the quadratic character of $\GF(p)$ and let
$f=a_2x^2+a_1x+a_0\in \GF(p)[x]$ with $p$ odd and $a_2\neq 0$. If $a_1^2-4a_0a_2\neq 0$ then:
\[
\sum_{c\in \GF(p)} \eta(f(c))=-\eta(a_2).
\]
\label{quad}
\end{theorem}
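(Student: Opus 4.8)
The plan is to reduce the evaluation to an elementary character sum by completing the square. Since $p$ is odd, $2a_2$ is invertible, so I would first write
\[
f(x) = a_2\left(x + \frac{a_1}{2a_2}\right)^2 - \frac{a_1^2 - 4a_0a_2}{4a_2}.
\]
Setting $D = a_1^2 - 4a_0a_2$ (nonzero by hypothesis) and substituting $y = x + a_1/(2a_2)$, which is a bijection of $\GF(p)$, the sum becomes $\sum_{y}\eta\bigl(a_2 y^2 - D/(4a_2)\bigr)$. Factoring $a_2$ out of the argument and using that $\eta$ is multiplicative, this equals $\eta(a_2)\sum_y \eta(y^2 - b)$ where $b = D/(4a_2^2)$ is nonzero. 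So it remains to show $\sum_{y\in\GF(p)}\eta(y^2 - b) = -1$ for every nonzero $b$; multiplying by $\eta(a_2)$ then yields the claimed value $-\eta(a_2)$.

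For the remaining sum I would count preimages under squaring. For each $u \in \GF(p)$ the number of $y$ with $y^2 = u$ equals $1 + \eta(u)$ (with the convention $\eta(0)=0$), so
\[
\sum_{y}\eta(y^2 - b) = \sum_{u}(1+\eta(u))\,\eta(u-b) = \sum_u \eta(u-b) + \sum_u \eta(u)\eta(u-b).
\]
The first sum is a translate of $\sum_u \eta(u)$, which vanishes because $\GF(p)$ contains equally many nonzero squares and nonsquares. In the second sum the term $u=0$ drops out, and for $u \neq 0$ I would write $\eta(u)\eta(u-b) = \eta\bigl(1 - b/u\bigr)$, using $\eta(u^2)=1$. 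As $u$ runs over the nonzero elements, $1 - b/u$ runs exactly once over $\GF(p)\setminus\{1\}$, so this sum equals $\sum_{z}\eta(z) - \eta(1) = 0 - 1 = -1$, as required.

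The computation is almost entirely routine; the only genuine idea is the substitution $u\mapsto 1-b/u$ on the nonzero elements, which converts $\sum_u \eta(u)\eta(u-b)$ into a complete character sum missing a single term. I would also stress that the general Weil bound of Theorem~\ref{veil} gives only $\lvert\sum_c\eta(f(c))\rvert \leq p^{1/2}$ for a quadratic, which is far weaker than the exact evaluation needed here; hence the explicit counting argument above is essential and a direct appeal to the preceding theorem does not suffice.
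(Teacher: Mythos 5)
Your proof is correct. Note, however, that there is no internal argument in the paper to compare against: the statement is quoted verbatim from Lidl and Niederreiter (Theorem~5.48 of \cite{lidl}) and used as a black box, so the paper's ``proof'' is the citation itself. What you have supplied is the standard self-contained derivation: complete the square (legitimate since $p$ is odd and $a_2\neq 0$) to reduce the sum to $\eta(a_2)\sum_y \eta(y^2-b)$ with $b\neq 0$, rewrite that sum as $\sum_u (1+\eta(u))\,\eta(u-b)$ by counting square roots, kill the translated complete sum $\sum_u\eta(u-b)$, and evaluate $\sum_{u\neq 0}\eta(u)\eta(u-b)=\sum_{u\neq 0}\eta(1-b/u)=-1$ via the substitution $u\mapsto 1-b/u$, which sweeps out $\GF(p)\setminus\{1\}$ exactly once. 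Every step checks, including the two places where the convention $\eta(0)=0$ is silently needed (at $y^2=b$ and at $u=b$), and the argument is essentially the one found in the cited textbook. Your closing remark is also the right observation in context: Theorem~\ref{veil} would only give $\left|\sum_{c}\eta(f(c))\right|\leq \sqrt{p}$ for a quadratic, whereas the proof of Theorem~\ref{themain} relies on the exact value $-\eta(a_2)$ so that the $\binom{4}{2}$ quadratic terms contribute at least $-6$, rather than $-6\sqrt{p}$, to the sum $S$; without the exact evaluation the final bound $p-11\sqrt{p}-6$ would degrade and the threshold $p\geq 191$ would not survive.
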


We can now show the main theorem of this section:
\begin{theorem}\label{themain}
If $P$ is a partial latin square of prime order $p\geq 191$
comprising of three arbitrary transversals, then $P$ has a
completion to a (diagonally cyclic) latin square.
\end{theorem}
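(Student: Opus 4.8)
The plan is to reduce, via Lemma~\ref{pair}, to a single existence question for the quadratic character and then settle it with the character-sum estimates of Theorems~\ref{veil} and~\ref{quad}. As set up above, the three transversals may be normalised to $0\circ 0=0$, $0\circ 1=j$ and $0\circ c=e$ with $c\notin\{0,1\}$, $j\notin\{0,1\}$ and $e\notin\{0,j,c,c+j-1,jc\}$, so by Lemma~\ref{pair} it suffices to produce, for every such $(c,e,j)$, an $m$th power $x\neq 0$ with $F(x)$ a non-zero $m$th power and with $G(x),H(x)$ not non-zero $m$th powers. I would take $m=2$ (legitimate since $2\mid p-1$ for odd $p$) and work throughout with the quadratic character $\eta$, so the requirement becomes: find $x$ with $\eta(x)=\eta(F(x))=1$, $\eta(G(x))\neq 1$ and $\eta(H(x))\neq 1$.

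I would count such $x$ by the standard device
\[
N=\frac{1}{16}\sum_{x\in\GF(p)}\bigl(1+\eta(x)\bigr)\bigl(1+\eta(F(x))\bigr)\bigl(1-\eta(G(x))\bigr)\bigl(1-\eta(H(x))\bigr),
\]
observing that each factor equals $2$, $0$, or (at a zero of its argument) $1$, so that $N$ agrees with the true number of admissible $x$ up to a bounded correction at the at most four points where one of $x,F(x),G(x),H(x)$ vanishes. The key structural observation is that $x$, $F(x)=\tfrac{1-jx}{1-j}$, $G(x)=\tfrac{e-xjc}{e-jc}$ and $H(x)=\tfrac{xj(1-c)+e-j}{e-jc}$ are four \emph{linear} polynomials in $x$, with roots $0$, $1/j$, $e/(jc)$ and $(j-e)/\bigl(j(1-c)\bigr)$. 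A direct check shows these four roots are pairwise distinct precisely because of the exclusions $c\neq 1$ and $e\notin\{0,j,c,c+j-1,jc\}$; hence every product of a nonempty subset of these forms is squarefree, and in particular is never of the shape $kg^2$.

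Expanding the product then gives the main term $\tfrac{1}{16}\sum_x 1=p/16$ together with fifteen character sums. Each of the four single-character sums is $\sum_x\eta(\ell(x))=0$, since $\ell$ is a linear bijection of $\GF(p)$; each of the six degree-two sums equals $-\eta(\text{leading coefficient})=\pm 1$ by Theorem~\ref{quad}, the discriminant being non-zero because the two roots differ; and the four degree-three sums and the single degree-four sum are bounded by $2p^{1/2}$ and $3p^{1/2}$ respectively by Weil's Theorem~\ref{veil}, using squarefreeness. Collecting these estimates yields $16N\ge p-11\,p^{1/2}-c_0$ for an explicit constant $c_0$ absorbing the six $\pm 1$ degree-two contributions and the boundary corrections at the zeros of $x,F,G,H$.

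Finally I would solve the resulting inequality: the right-hand side is positive once $p^{1/2}$ exceeds the larger root of $u^2-11u-c_0=0$, and a careful accounting of $c_0$ (around $38$ in this bookkeeping) pushes the threshold to just under $190$, so that $N\ge 1$ for every prime $p\ge 191$, which furnishes the required completion. The method is routine character-sum technology; the genuine content, and the step I expect to be most delicate, is the error accounting that converts the clean bound $16N\ge p-11\,p^{1/2}-c_0$ into the sharp cutoff $191$, namely tracking the degree-two terms and the corrections at the vanishing points tightly enough to rule out an unnecessarily large constant. Primes with $7<p<191$ fall outside the scope of this theorem and are dispatched computationally in Section~\ref{sec8}.
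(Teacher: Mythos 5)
Your proposal is correct and takes essentially the same route as the paper: the same normalisation via Lemma~\ref{pair} with $m=2$, the same expansion of the product $[1+\eta(x)][1+\eta(F(x))][1-\eta(G(x))][1-\eta(H(x))]$, the same use of Theorem~\ref{quad} for the six quadratic terms and of Weil's Theorem~\ref{veil} for the four cubic terms ($-8\sqrt{p}$) and the quartic term ($-3\sqrt{p}$), and the same final accounting $p-11\sqrt{p}-38>0$ giving the cutoff $p\geq 191$. The only cosmetic difference is that you normalise the sum by $1/16$, whereas the paper works with the unnormalised sum $S$ and the inequality $S>32$; the bookkeeping is identical.
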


\begin{proof}
Setting $m=2$, it suffices to show that the conditions of
Lemma~\ref{pair} hold for any valid choices of $j$, $c$, and $e$. We
wish to show that there exists some $x\in \GF(p)$ such that
$\eta(x)=1$, $\eta(F(x))=1$, ($\eta(G(x))=-1$ or $0$) and
($\eta(H(x))=-1$ or $0$). In fact, we show that there exists
specifically such an $x$ for which $\eta(x)=1$, $\eta(F(x))=1$,
$\eta(G(x))=-1$ and $\eta(H(x))=-1$.

Equivalently, we will show that for $p\geq 191$, the following set $A$
is non-empty:
\[
A = \{x\mid \eta(x)= \eta(F(x))=1 \textnormal{ and } \eta(G(x))=
\eta(H(x))=-1\}.
\]
Define
\[
J(x)= [1+\eta(x)][1+\eta(F(x))][1-\eta(G(x))][1-\eta(H(x))]
\]
and
\[
S=\sum_{x\in \GF(p)} J(x).
\]
If any of $x$, $F(x)$, $G(x)$ or $H(x)$ equals $0$, then $J(x) \leq 8$.
If 
$x\in A$ then $J(x) = 16$.  For other values 
of $x \in \GF(p)$, $J(x)=0$.  Thus $S \leq 16 \left| A
\right|+32$. So it suffices to show that $S > 32$.

By expanding $S$ and using the fact that $\eta$ is a homomorphism,
we can express $S$ as $p$ plus a series of terms of the form
$\sum_{c\in \GF(p)} \eta(K(x))$, where $K(x)$ is a
product of non-repeated factors from the set $\{x,F(x),-G(x),-H(x)\}$.

For any (non-constant) linear function $g(x)$, it is well-known that
\[
\sum_{x\in \GF(p)} g(x)=0.
\]
From the necessary conditions on $j$, $c$ and $e$, each of the
quadratics $xF(x)$, $xG(x)$, $xH(x)$ $F(x)G(x)$, $F(x)H(x)$ and
$G(x)H(x)$ have no repeated linear factors and thus each has a
non-zero discriminant.

It follows that the conditions of Weil's theorem hold for each of
the terms of the form $K(x)$ in the expansion of $S$. There are $4
\choose 2$ quadratic terms, so from Theorem \ref{quad}, these
contribute at least $-6$ to $S$. Next, there are ${4 \choose 3}$
cubic terms, so from Theorem \ref{veil} with $d=3$, these contribute
at least $-8\sqrt{p}$ to $S$. Finally there is one quartic term, so
from Theorem \ref{veil} with $d=4$, this contributes at least
$-3\sqrt{p}$ to $S$. Thus, $S \geq p - 11\sqrt{p}-6$. But
$p-11\sqrt{p}-6 > 32$ for any prime $p\geq 191$.  
\end{proof}

\section{Computational results}\label{sec8}

By using computational methods for primes less than $191$,
Theorem~\ref{themain} from the previous section can be improved to the following:
\begin{theorem}
If $P$ is a partial latin square of prime order $p>7$
of the form
\[
P=\{(i,c+i,s+i),(i,c'+i,s'+i),(i,c''+i,s''+i)\mid 0\leq i < p\}
\]
for some distinct $c,c',c''$ and some distinct $s,s',s''$,
then $P$ has a completion to a (diagonally cyclic) latin square.
\label{main2}
\end{theorem}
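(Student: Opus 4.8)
The plan is to combine the asymptotic result already in hand with a finite computation, treating the two prime regimes separately. For every prime $p \geq 191$, Theorem~\ref{themain} supplies the desired completion directly, so nothing more is required there. The remaining primes form the finite list $7 < p < 191$, namely $p \in \{11, 13, 17, \ldots, 181\}$, and for each of these I would carry out an exhaustive search that certifies the hypotheses of Lemma~\ref{pair} are always satisfiable.

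Before searching, I would reduce to the normal form of Section~\ref{sec7}. Since the row-, symbol- and column-transformations $\oplus_1$, $\oplus_3$ and $\times$ of Section~\ref{sec1} are invertible and preserve completability to a diagonally cyclic latin square, any three cyclically generated transversals of the stated form may be assumed without loss of generality to be generated by $0 \circ 0 = 0$, $0 \circ 1 = j$ and $0 \circ c = e$, with the necessary conditions $c \notin \{0,1\}$, $j \notin \{0,1\}$ and $e \notin \{0, j, c, c+j-1, jc\}$ in force. By Lemma~\ref{pair}, completing such a configuration reduces to exhibiting, for each admissible triple $(j,c,e)$, a divisor $m \geq 2$ of $p-1$ and an $x \in \GF(p)$ for which $x$ and $F(x)$ are nonzero $m$th powers while $G(x)$ and $H(x)$ are not nonzero $m$th powers.

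The computation itself would then enumerate, for each prime $p$ in the range, all $O(p^3)$ admissible triples $(j,c,e)$; for each triple it would loop over the divisors $m$ of $p-1$ and test candidate values of $x$. Precomputing, for every such $m$, the subgroup of nonzero $m$th powers turns each of the four membership conditions into a table lookup, so the entire search is comfortably finite and fast for $p < 191$. A single $x$ passing all four tests is a verifiable certificate that the given transversals complete, and the theorem for that prime follows once every admissible triple has been certified.

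The principal obstacle is completeness rather than efficiency: the search must succeed for every admissible triple, and it is not a priori clear that it will. For large $p$ Weil's theorem guarantees a valid $x$ already at $m=2$, but for small primes the character-sum estimate is too weak, and for a given triple there may be no quadratic $x$ at all; the essential point is that letting $m$ range over all divisors of $p-1$ repairs these failures, trading fewer $m$th powers for easier non-residue conditions. The computation either produces a uniform certificate---which is what the theorem asserts---or would expose an irreparable triple, in which case a separate ad hoc completion would be needed. I would also verify the boundary: the exclusion $p \leq 7$ is genuine, since the solutions of Equation~\eqref{egsactly} underlying Theorem~\ref{mth} (for instance $(a,b,c)=(7,1,5)$ with $j=(p+1)/2$) degenerate for $p \leq 7$, so the search must---and is expected to---succeed precisely from $p = 11$ onward.
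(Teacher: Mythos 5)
Your overall architecture matches the paper's: Theorem~\ref{themain} for $p\geq 191$, and a finite computation over the admissible triples $(j,c,e)$ for the primes $7<p<191$. But there is a genuine gap in the way you expect the computation to go. Your central claim---that letting $m$ range over all divisors of $p-1$ ``repairs'' the failures of the quadratic case, so that the Lemma~\ref{pair} certificate search ``must---and is expected to---succeed precisely from $p=11$ onward''---is false. The paper's computation shows that searching over all appropriate $m$ yields complete solutions only for the primes $61\leq p\leq 181$; for the primes $11\leq p\leq 59$ there remain $2076$ admissible triples $(j,c,e)$ (e.g.\ $180$ for $p=11$, $672$ for $p=23$) for which \emph{no} valid pair $(m,x)$ satisfying Lemma~\ref{pair} exists. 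For those exceptions the trade mechanism of Theorem~\ref{mth} is simply unavailable, and the paper completes the proof by an entirely different means: a randomised depth-first (backtrack) search~\cite{gomes98boosting} that directly constructs the first row of a diagonally cyclic latin square containing the three prescribed transversals (the explicit first rows for $p=11$ are tabulated in the paper). You do mention ``a separate ad hoc completion'' as a contingency, but in your proposal it is dismissed as unnecessary, whereas in fact it is an essential, non-optional component of the proof covering thousands of cases; a proof that omits it fails for every prime from $11$ to $59$.

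A secondary inaccuracy: your explanation of the boundary $p>7$ is not the right one. The exclusion of $p=7$ is not because the particular solution $(a,b,c)=(7,1,5)$ of Equation~\eqref{egsactly} degenerates there; it is because the theorem is genuinely false at order $7$---Gr\"uttm\"uller~\cite{gr1} exhibits three cyclic transversals of order $7$ that do not complete to any diagonally cyclic latin square. So no amount of additional searching (by Lemma~\ref{pair} or otherwise) could extend the result to $p=7$, which is a stronger and different obstruction than the degeneration you describe.
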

Note that in~\cite{gr1} it is shown that three cyclic transversals do not always
complete to diagonally cyclic latin squares of order $7$.

Two approaches were used to do the small cases computationally. Firstly, we
checked all the instances where the conditions of Lemma~\ref{pair} were satisfied
for an appropriate $m$.
This approach provided complete solutions
for all primes between $61$ and $181$ (inclusive),
and most solutions for primes between $11$ and $59$,
leaving a total of 
2076 exceptions (choices of $j$, $c$, and $e$) for smaller primes:
\[
\begin{array}{|l|c|c|c|c|c|c|c|c|c|c|c|c|c|}
\hline
\textnormal{ prime } & 
11 &
13 &
17 &
19 &
23 &
29 &
31 &
41 &
47 &
59 \\ 
\hline
\textnormal{ \#excep } & 
180 &
54 &
306 &
84 &
672 &
252 &
66 &
12 &
300 &
150 \\ 
\hline
\end{array}
\]
The exceptions $11 \leq p \leq 59$ were solved using a randomised depth
first search algorithm~\cite{gomes98boosting}.

\clearpage
\section{Case $p=11$}

Here we present the computational data for $p=11$. Each of the
following four columns contain the exceptions (triples $c$, $e$,
$j$) and the first row of the diagonally cyclic latin square which
was found by backtrack search for the given triple. The symbol `a'
denotes $10$.

{ \tiny
\begin{multicols}{4}
\noindent 
2 1 4 04175a29368 \\
2 1 8 08146a92573 \\
2 3 5 0538a429617 \\
2 3 9 09358a42716 \\
2 5 2 0258a149763 \\
2 5 9 09582176a43 \\
2 6 7 076194835a2 \\
2 6 8 08625391a47 \\
2 7 4 0472a185963 \\
2 7 5 057a6432918 \\
2 8 3 038279415a6 \\
2 8 10 0a846953217 \\
2 10 3 03a49852176 \\
2 10 7 07a49853621 \\
3 1 5 05a13879246 \\
3 1 7 07518624a39 \\
3 1 8 085169327a4 \\
3 1 9 09417a58326 \\
3 2 3 035291a8647 \\
3 2 4 049287316a5 \\
3 2 5 0582614a937 \\
3 2 7 07925a84613 \\
3 4 3 03148a92657 \\
3 4 7 07542a39186 \\
3 4 9 0964371a258 \\
3 5 2 02651a93748 \\
3 5 4 04a59372186 \\
3 5 8 08a57642139 \\
3 6 3 03962a58147 \\
3 8 9 091857a3642 \\
3 9 4 04791625a83 \\
3 9 8 083962a5714 \\
3 9 10 0a495831726 \\
3 10 3 038a1695724 \\
3 10 5 054a9276318 \\
3 10 9 094a7652138 \\
4 1 2 0285194a736 \\
4 1 7 0795164a283 \\
4 1 8 08721695a43 \\
4 2 3 037a2498165 \\
4 2 9 097524a8316 \\
4 3 4 04693a28571 \\
4 3 5 057938246a1 \\
4 3 10 0a483692157 \\
4 5 6 06925814a73 \\
4 5 7 07a65429138 \\
4 5 8 0869543a271 \\
4 6 4 04a86175329 \\
4 6 8 08376a42519 \\
4 6 9 09376a15482 \\
4 7 3 038271a5964 \\
4 7 5 05397421a68 \\
4 8 7 07328a94651 \\
4 8 9 0942817a635 \\
4 9 3 03a49826175 \\
4 9 4 04a59172683 \\
4 9 8 08379415a62 \\
4 10 4 0431a986257 \\
4 10 5 0538a296471 \\
4 10 6 0694a285713 \\
5 1 2 0248a196573 \\
5 1 3 038651a4279 \\
5 1 4 04a72186935 \\
5 1 5 0542a198637 \\
5 1 7 074621a8539 \\
5 2 3 0368521a479 \\
5 2 8 0865321a974 \\
5 3 4 04981376a25 \\
5 3 9 097263a8415 \\
5 4 5 058a1479632 \\
5 4 7 07a59428613 \\
5 4 8 08a65419273 \\
5 4 9 0937641a258 \\
5 4 10 0a695483217 \\
5 7 4 04a85713629 \\
5 7 5 0532971a468 \\
5 7 9 0916a748253 \\
5 8 5 0541a873269 \\
5 8 7 07915836a24 \\
5 9 3 0318297a465 \\
5 9 7 073a6941582 \\
5 9 8 08a53971642 \\
6 1 5 05742813a69 \\
6 1 8 08a75419632 \\
6 1 9 096574132a8 \\
6 2 6 0685142a973 \\
6 2 7 07358a24619 \\
6 4 5 05793148a62 \\
6 4 6 068572431a9 \\
6 5 3 0398a654172 \\
6 5 4 04a92758136 \\
6 5 7 07361a59428 \\
6 7 8 08591376a24 \\
6 8 6 063a2984715 \\
6 8 9 09615a83742 \\
6 9 3 03148a92657 \\
6 9 6 061a2798543 \\
6 10 4 048136a9257 \\
7 1 2 0296a481573 \\
7 1 3 038a5491627 \\
7 1 4 04928371a65 \\
7 1 5 054928317a6 \\
7 2 3 03a68152974 \\
7 2 9 095861427a3 \\
7 3 4 04172a83965 \\
7 3 7 07158a43962 \\
7 4 5 05397a24681 \\
7 4 8 085936a4271 \\
7 5 3 03a87625149 \\
7 5 9 09873625a14 \\
7 6 7 07a49386152 \\
7 6 8 0854a936271 \\
7 6 9 095187263a4 \\
7 6 10 0a398726514 \\
7 8 4 0472a938651 \\
7 8 5 054a7218639 \\
7 9 5 058174392a6 \\
7 9 7 07a83649152 \\
7 10 7 0748532a169 \\
7 10 8 0817624a935 \\
8 1 2 02a97386154 \\
8 1 3 035a9642187 \\
8 1 4 04392a86157 \\
8 1 6 068a5439172 \\
8 1 9 098627531a4 \\
8 2 5 0596a438271 \\
8 2 7 074a8635219 \\
8 2 8 08a63975241 \\
8 3 8 081762953a4 \\
8 3 9 097624a8351 \\
8 5 3 0317962a548 \\
8 5 4 047139825a6 \\
8 6 4 04953a18627 \\
8 6 5 05493a28617 \\
8 6 7 07a43981652 \\
8 7 3 03a69148725 \\
8 7 6 065a2938741 \\
8 7 8 084692157a3 \\
8 7 9 0938614a725 \\
8 7 10 0a685139742 \\
8 9 4 048a2751963 \\
8 9 5 05417a32986 \\
8 10 7 07185394a26 \\
8 10 8 08721695a43 \\
9 1 3 036a9248715 \\
9 1 9 09642a83715 \\
9 1 10 0a875439216 \\
9 2 4 04a95381726 \\
9 3 5 05192874a36 \\
9 3 7 074a1958632 \\
9 3 8 084916a5732 \\
9 4 3 0397165a248 \\
9 4 5 05a63721948 \\
9 5 7 0741863a259 \\
9 5 9 0917268a354 \\
9 6 4 049853a2761 \\
9 6 5 0538a149762 \\
9 6 7 075a2481963 \\
9 7 8 081596a4372 \\
9 8 2 02a57941386 \\
9 8 3 037916a5482 \\
9 8 9 0957a643281 \\
9 10 3 036971542a8 \\
9 10 4 048197265a3 \\
9 10 8 081762953a4 \\
9 10 9 095721863a4 \\
10 1 7 076a5394281 \\
10 2 5 057913864a2 \\
10 2 8 08653a14972 \\
10 2 10 0a685139742 \\
10 3 4 0472a185963 \\
10 4 3 035a1972684 \\
10 4 9 0965217a384 \\
10 6 3 035192a8746 \\
10 6 9 0935a841726 \\
10 7 8 08653a42917 \\
10 8 2 02579136a48 \\
10 8 4 0495a631728 \\
10 8 7 076514932a8 \\
10 9 5 0536a184279
\end{multicols}
}

\section{Acknowledgements}

The authors wish to thank Dr.\ Julian Abel for directing them to~\cite{bur}.

\bibliographystyle{plain}

\end{document}